\newtheorem{theorem}{Theorem}[section]
\newtheorem*{theorem*}{Theorem}
\newtheorem{corollary}[theorem]{Corollary}
\theoremstyle{definition}
\newtheorem{definition}[theorem]{Definition}
\theoremstyle{remark}
\newtheorem{remark}[theorem]{Remark}
\numberwithin{equation}{section}
\newcommand{\pt}{\mathrm{pt}}
\newcommand{\Sg}{\Sigma}
\DeclareMathOperator{\Hom}{Hom}
\renewcommand{\epsilon}{\varepsilon}
\newcommand{\barany}{B\'{a}r\'{a}ny}
\newcommand{\blagojevic}{Blagojevi\'{c}}
\newcommand{\dolnikov}{Dol'nikov}
\newcommand{\matousek}{Matou\v{s}ek}
\newcommand{\szucs}{Sz{\H{u}}cs}
\newcommand{\vrecica}{Vre\'{c}ica}
\newcommand{\vucic}{Vu\v{c}i\'{c}}
\newcommand{\zivaljevic}{\v{Z}ivaljevi\'{c}}
\newcommand{\RR}{\mathbb{R}} %reals
\newcommand{\FF}{\mathbb{F}} %any field
\newcommand{\RP}{\mathbb{R}P} %any field
\newcommand{\toG}[1]{\longrightarrow_{#1}} %arrow with symmetry group
\DeclareMathOperator{\homeo}{\cong} %homeomorphic
\DeclareMathOperator{\iso}{\cong} %isomorphic
\newcommand{\ceil}[1]{\left\lceil{#1}\right\rceil} %rounding up
\newcommand{\incl}{\hookrightarrow} %short inclusion arrow
\newcommand{\eps}{\varepsilon}
\DeclareMathOperator{\dotcup}{\dot{\cup}}
\begin{document}

\title{Projective center point and Tverberg theorems}

\author{Roman~Karasev$^{*}$}
\address{Roman Karasev, Dept. of Mathematics, Moscow Institute of Physics and Technology, Institutskiy per. 9, Dolgoprudny, Russia 141700; and Laboratory of Discrete and Computational Geometry, Yaroslavl' State University, Sovetskaya st. 14, Yaroslavl', Russia 150000}
\thanks{{$^{*}$} Supported by the Dynasty Foundation, the President's of Russian Federation grant MD-352.2012.1, the Russian Foundation for Basic Research grants 10-01-00096 and 10-01-00139, the Federal Program ``Scientific and scientific-pedagogical staff of innovative Russia'' 2009--2013, and the Russian government project 11.G34.31.0053.}
\email{r\_n\_karasev@mail.ru}

\author{Benjamin~Matschke$^{**}$}
\address{Benjamin Matschke, Institute for Advanced Study, Princeton}
\email{matschke@ias.edu}
\thanks{{$^{**}$} Supported by NSF Grant DMS-0635607.}

%\date{\today}
\date{July 10, 2012}

\keywords{center point theorem, Tverberg's theorem}
\subjclass[2000]{52A35, 52C35}

%%%%%%%%%%%%%%%%%%%%%%%%%%%%%%%%%%%%%%%%%%%%%%%%%%%%%%%%%%%
\begin{abstract}
%%%%%%%%%%%%%%%%%%%%%%%%%%%%%%%%%%%%%%%%%%%%%%%%%%%%%%%%%%%

We present projective versions of the center point theorem and Tverberg's theorem, interpolating between the original and the so-called ``dual'' center point and Tverberg theorems.

Furthermore we give a common generalization of these and many other known (trans\-ver\-sal, constraint, dual, and colorful) Tverberg type results in a single theorem, as well as some essentially new results about partitioning measures in projective space.

\end{abstract}

\maketitle

%%%%%%%%%%%%%%%%%%%%%%%%%%%%%%%%%%%%%%%%%%%%%%%%%%%%%%%%%%%
\section{Introduction}
%%%%%%%%%%%%%%%%%%%%%%%%%%%%%%%%%%%%%%%%%%%%%%%%%%%%%%%%%%%

In this paper we focus on two classical topics in discrete geometry:
the center point theorem from Neumann~\cite{neumann1945} and Rado~\cite{rado1947} (see also Gr\"unbaum~\cite{grun1960}) and Tverberg's theorem~\cite{tver1966}.
Many deep generalizations of these classical results have been made in the last three decades, starting from the topological generalization by \barany, Shlosman, and \szucs~\cite{bss1981}.
A good review on this topic and numerous references are given in \matousek's book~\cite{mat2003}.
After this book was published, new achievements were made by Hell~\cite{hell2007, hell2008}, Engstr\"om~\cite{eng2011} and Engstr\"om--Nor\'en~\cite{engnor2011}, K.~\cite{kar2008dcpt}, and \blagojevic--M.--Ziegler~\cite{bmz2009ct,bmz2011ctv}, establishing ``constrained'', ``dual'', and ``optimal colorful'' Tverberg type theorems.

The discrete center point theorem states the following:
For any finite set $X\subset \mathbb R^d$ there exists a \emph{center point} $c\in\mathbb R^d$ such that any closed half-space $H\ni c$ contains at least $\left\lceil\frac{|X|}{d+1}\right\rceil$ points of $X$.
In K.~\cite{kar2008dcpt} a dual center point theorem and a dual Tverberg theorem for families of hyperplanes were proved.
The dual center point theorem states:
For any family of $n$ hyperplanes in general position in $\mathbb R^d$ there exists a point $c$ such that any ray starting at $c$ intersects at least $\left\lceil\frac{n}{d+1}\right\rceil$ hyperplanes.

Here the use of the adjective ``dual'' is rather descriptive, it does not refer to projective duality.
Thus it is interesting to dualize it once more projectively and compare it with the original center point theorem.

\begin{definition}
\label{defPartitioningRPd}
Any two distinct hyperplanes $H_1$ and $H_2$ partition $\mathbb RP^d$ into two pieces. In this paper, we always consider the pieces as being closed. If $H_1$ and $H_2$ coincide then we consider $H_1=H_2$ as one piece and the whole $\mathbb RP^d$ as the other.
\end{definition}

The projective dual of the ``dual center point theorem'' reads:
Assume that $X$ is a family of $n$ points in $\mathbb RP^d$ and $c\in \mathbb RP^d$ is another point such that the family $X\cup c$ is in general position. Then there exists a hyperplane $W\subseteq \mathbb RP^d$ such that together with any hyperplane $H_1\ni c$ it partitions $\mathbb RP^d$ into two parts each containing at least $\left\lceil\frac{n}{d+1}\right\rceil$ points of~$X$.
From the proof of this theorem we can assure that $W$ does not contain~$c$; however if we omit the general position assumption then the theorem remains true by a compactness argument but $W$ may happen to contain~$c$.

Now we are going to interpolate between the original center point theorem and the latter ``dual to dual'' version (they appear as special cases when $V$ is the hyperplane at infinity or when $V$ is a point):

\begin{theorem}[Projective center point theorem]
\label{projective-cpt}
Suppose that $V\subset \mathbb RP^d$ is a projective subspace of dimension $v$ and $X$ is a finite point set with $|X|=n$. Put
$$
r = \left\lceil\frac{n}{(d-v)(v+1)+1}\right\rceil.
$$
Then there exists a projective subspace $W\subset \mathbb RP^d$ of dimension $d-v-1$ such that any pair of hyperplanes $H_1\supseteq V$ and $H_2\supseteq W$ partitions $\mathbb RP^d$ into two parts each containing at least $r$ points of $X$.

If we require the general position assumption, that no $r$ points of $X$ together with $V$ are contained in a hyperplane, then $W$ may be chosen disjoint from $V$.
\end{theorem}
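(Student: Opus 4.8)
The plan is to recast the projective statement as a linear-algebraic one via the Segre (tensor) embedding, and then to obtain $W$ from the classical center point theorem together with a parametrized topological argument. Write $\mathbb{RP}^d=\mathbb{P}(\mathbb{R}^{d+1})$, let $\widetilde V$ be the linear subspace with $V=\mathbb{P}(\widetilde V)$, $\dim\widetilde V=v+1$, and fix a lift $\widetilde x\in\mathbb{R}^{d+1}$ of every $x\in X$. Choosing a $W$ disjoint from $V$ is the same as choosing a linear complement $\widetilde W$ of $\widetilde V$ in $\mathbb{R}^{d+1}$ (then $W=\mathbb{P}(\widetilde W)$, $\dim\widetilde W=d-v$); decompose $\widetilde x=\widetilde x_V+\widetilde x_W$ accordingly. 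An admissible hyperplane $H_1\supseteq V$ is $\mathbb{P}(\ker\phi_1)$ with $\phi_1\in\widetilde V^\perp\setminus\{0\}$, an admissible $H_2\supseteq W$ is $\mathbb{P}(\ker\phi_2)$ with $\phi_2\in\widetilde W^\perp\setminus\{0\}$, and restriction gives canonical isomorphisms $\widetilde V^\perp\cong\widetilde W^*$ and $\widetilde W^\perp\cong\widetilde V^*$. Since $\phi_1$ vanishes on $\widetilde V$ and $\phi_2$ on $\widetilde W$,
\[
\phi_1(\widetilde x)\,\phi_2(\widetilde x)=\phi_1(\widetilde x_W)\,\phi_2(\widetilde x_V)=\psi(T_x),\qquad T_x:=\widetilde x_V\otimes\widetilde x_W\in\widetilde V\otimes\widetilde W,\quad\psi:=\phi_2\otimes\phi_1 .
\]
The quantity $T_x$ is independent of the sign of the lift $\widetilde x$, and $\psi$ runs exactly over the nonzero rank-one (Segre) functionals on $\widetilde V\otimes\widetilde W$ as $(H_1,H_2)$ runs over admissible pairs. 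As the two parts of the partition of $\mathbb{RP}^d$ by $H_1,H_2$ are $\{x:\phi_1(\widetilde x)\phi_2(\widetilde x)\ge0\}$ and $\{x:\phi_1(\widetilde x)\phi_2(\widetilde x)\le0\}$, and replacing $\phi_2$ by $-\phi_2$ exchanges them, it suffices to find a complement $\widetilde W$ such that $0\in\widetilde V\otimes\widetilde W$ is a center point of the $n$-point family $\{T_x\}_{x\in X}$ with parameter $r$ with respect to the rank-one functionals, i.e.\ $|\{x\in X:\psi(T_x)>0\}|\le n-r$ for every such $\psi$. (Points of $X$ lying on $V$ or on $W$ give $T_x=0$ and lie in both parts, so they only help; if general position is dropped one lets $\widetilde W$ degenerate and gets the non-strict statement by a compactness/limiting argument.)

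The numerology is designed exactly for this: $\dim(\widetilde V\otimes\widetilde W)=(v+1)(d-v)=m-1$ with $m=(d-v)(v+1)+1$, and $r=\lceil n/m\rceil$ is equivalent to $n\ge m(r-1)+1=(\dim(\widetilde V\otimes\widetilde W)+1)(r-1)+1$, so the classical center point theorem (equivalently Tverberg's theorem) applies to $\{T_x\}$ in $\mathbb{R}^{m-1}$ and produces a center point with parameter $r$ --- but at an uncontrolled location and for all functionals, whereas we need the point $0$ and only the Segre ones; the freedom in $\widetilde W$ must make up the difference. As a sanity check of the set-up: for $v=d-1$ the subspace $\widetilde W$ is a line, every functional on $\widetilde V\otimes\widetilde W$ is Segre, and in the affine chart $\mathbb{RP}^d\setminus V$ with $W=\{w\}$ one has $T_x=\widetilde x-w$, so the condition becomes exactly ``$w$ is a center point of $X$'' and the statement collapses to the classical one; for $v=0$, $W$ is a hyperplane and unwinding the above recovers the dual center point theorem.

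To construct $\widetilde W$ --- which is the crux --- fix a reference complement $\widetilde W_0$ and parametrize complements of $\widetilde V$ by $L\in\Hom(\widetilde W_0,\widetilde V)$ (as the graph of $L$); then $\widetilde x_W$ is fixed while $\widetilde x_V=\widetilde x^{0}_V-L\widetilde x^{0}_W$ depends affinely on $L$. Hence, for each fixed Segre functional $\psi=\phi_2\otimes\phi_1$, the locus $\{L:\psi(T_x)>0\}$ is an open half-space $U_x(\psi)$ (or empty) in the $(m-1)$-dimensional parameter space, and we must exhibit one $L$ lying in at most $n-r$ of the $U_x(\psi)$ for every $\psi$. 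Plain Helly does not apply, since for fixed $\psi$ the set $\{L:\text{at most }n-r\text{ of the }U_x(\psi)\text{ contain }L\}$ is the complement of a union of convex sets, hence not convex; this is the heart of the difficulty. Instead I would run a parametrized center point argument: setting $S_x(\phi_1):=\phi_1(\widetilde x_W)\,\widetilde x_V\in\widetilde V$ (again affine in $L$ and sign-independent), the requirement for a fixed $\phi_1$ is that $0$ be a center point with parameter $r$ of $\{S_x(\phi_1):x\in X\}$ in $\widetilde V\cong\mathbb{R}^{v+1}$, and these conditions, indexed by $\phi_1\in\mathbb{P}(\widetilde W^*)\cong\mathbb{RP}^{d-v-1}$, must hold simultaneously for one $L$. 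The balanced count $\dim\Hom(\widetilde W_0,\widetilde V)=(d-v)(v+1)=\dim(\widetilde V\otimes\widetilde W)$ points to a degree/Borsuk--Ulam argument: build the configuration space--test map scheme over the sphere parametrizing the pairs $(\phi_1,\phi_2)$, equipped with the diagonal antipodal $\mathbb{Z}/2$-action (which flips each $\psi(T_x)$, i.e.\ swaps the two parts of the partition), and reduce the non-existence of a compatible equivariant map to the same cohomological obstruction that drives the classical and the dual center point theorems; alternatively one can try to deduce the statement from a sufficiently general Tverberg-type theorem (of the kind developed later in the paper) by the constraint method together with the Segre embedding above. I expect this existence step --- using the rank-one structure of the $T_x$ so that a single $\widetilde W$ works for all $(\phi_1,\phi_2)$, while keeping the denominator exactly $(d-v)(v+1)+1$ --- to be the only genuine obstacle; the reformulation and the numerics are routine.
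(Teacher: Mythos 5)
Your reformulation is sound and in fact closely parallels the paper's own first step: the paper also passes to homogeneous coordinates and (after projective dualization) reduces everything to a pairing between a line $\widehat x\subseteq\widehat V$ and a line $\widehat y\subseteq\widehat W$, which is exactly the rank-one/Segre structure you isolate in $\psi(T_x)=\phi_1(\widetilde x_W)\phi_2(\widetilde x_V)$; your dimension count $D=(d-v)(v+1)$ and the two sanity checks ($v=d-1$ and $v=0$) are correct. But the step you yourself flag as ``the only genuine obstacle'' --- actually producing one $\widetilde W$ that works for all Segre functionals simultaneously --- is precisely where the proof lives, and none of the routes you sketch (a Borsuk--Ulam scheme over the sphere of pairs $(\phi_1,\phi_2)$, a degree argument, or the constraint method) is carried out or obviously executable as stated: the condition is a continuum of half-space counting constraints indexed by $\mathbb{P}(\widetilde W^*)\times\mathbb{P}(\widetilde V^*)$, and it is not a single equivariant-map obstruction in any evident way. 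So the proposal has a genuine gap at its center.

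The paper's mechanism, which you are missing, is the following. After dualizing (so that the points of $X$ become hyperplanes with defining functionals $\lambda_1,\dots,\lambda_n$ on $\RR^{d+1}$), one assigns to each $p=(t_1,\dots,t_n)\in\Delta^{n-1}$ and $\eps>0$ the positive definite form $q_{p,\eps}=\sum_i t_i\lambda_i^2+\eps\langle\cdot,\cdot\rangle$ and lets $\widehat W_{p,\eps}$ be the $q_{p,\eps}$-orthogonal complement of $\widehat V$. This gives a continuous map $\Delta^{n-1}\to\RR^{D}$ into the Schubert cell of complements. The combinatorial engine replacing the ``plain Helly'' that you correctly observe fails is the \emph{topological center point theorem} of Karasev (arXiv:1011.1802): for any continuous map from $\Delta^{n-1}$ to a $D$-dimensional space, the images of all faces of dimension at least $n-r$ (with $r=\lceil n/(D+1)\rceil$) have a common point. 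This yields a single $\widehat W_\eps$ realized as $\widehat W_{p_F,\eps}$ for some $p_F$ in \emph{every} $(n-r)$-face $F$. One then lets $\eps\to0$ and argues by contradiction: if some closed part $P_1$ of a $2$-plane $\widehat\ell$ spanned by lines $\widehat x\subseteq\widehat V$, $\widehat y\subseteq\widehat W$ met fewer than $r$ of the hyperplanes, the limiting form $q_F$ for the corresponding face would restrict on $\widehat\ell$ to a convex combination of squares $(a_ix+b_iy)^2$ with $a_ib_i>0$, forcing the $q_F$-inner product of $\widehat x$ and $\widehat y$ to be strictly positive and contradicting orthogonality. This orthogonality trick is what lets one point of $\RR^D$ certify all Segre functionals at once, and it is the idea your proposal would need to supply.
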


The ordinary center point theorem~\cite{neumann1945,rado1947} is usually stated for measures, which follows from the discrete version by an approximation argument.
Here is the corresponding version:

\begin{theorem}[Projective center point theorem for measures]
\label{projective-cpt-mes}
Suppose that $V\subset \mathbb RP^d$ is a projective subspace of dimension $v$ and $\mu$ is a probability measure on $\mathbb RP^d$. Then there exists a projective subspace $W\subset \mathbb RP^d$ of dimension $d-v-1$ such that any pair of hyperplanes $H_1\supseteq V$ and $H_2\supseteq W$ partitions $\mathbb RP^d$ into two parts $P_1$ and $P_2$ so that
$$
\mu(P_1), \mu(P_2) \ge \frac{1}{(d-v)(v+1)+1}.
$$
\end{theorem}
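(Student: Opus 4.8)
The plan is to deduce this from the discrete Theorem~\ref{projective-cpt} by a weak-$*$ approximation argument, just as the classical center point theorem for measures is obtained from its discrete form. Write $c := (d-v)(v+1)+1$. First I would pick finite point sets $X_n\subset\RP^d$ whose normalized counting measures $\mu_n:=\frac1{|X_n|}\sum_{x\in X_n}\delta_x$ converge weak-$*$ to $\mu$; such sets exist because the normalized counting measures of finite subsets of $\RP^d$ are weak-$*$ dense among probability measures on the compact metric space $\RP^d$ (resolve $\mu$ by an ever finer partition and place the appropriate number of distinct points in each cell). Applying Theorem~\ref{projective-cpt} to $X_n$ gives a subspace $W_n$ of dimension $d-v-1$ such that every pair $H_1\supseteq V$, $H_2\supseteq W_n$ splits $\RP^d$ into two closed parts each containing at least $\lceil |X_n|/c\rceil\ge |X_n|/c$ points of $X_n$, i.e. each of $\mu_n$-measure $\ge 1/c$. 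The Grassmannian of $(d-v-1)$-dimensional projective subspaces of $\RP^d$ is compact, so after passing to a subsequence we may assume $W_n\to W$; this $W$ is the desired subspace.

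It remains to check that $W$ works for $\mu$. Fix hyperplanes $H_1\supseteq V$ and $H_2\supseteq W$ and let $P_1,P_2$ be the two closed parts. Since the incidence variety $\{(W',H')\st \dim W'=d-v-1,\ \dim H'=d-1,\ W'\subseteq H'\}$ is compact and fibers over the Grassmannian of $(d-v-1)$-subspaces with fiber $\RP^v$, we may choose hyperplanes $H_2^{(n)}\supseteq W_n$ with $H_2^{(n)}\to H_2$. Let $P_1^{(n)},P_2^{(n)}$ be the parts determined by $(H_1,H_2^{(n)})$, labeled so as to match $P_1,P_2$ in the limit $H_2^{(n)}\to H_2$; then $\mu_n(P_i^{(n)})\ge 1/c$ for every $i,n$. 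Describing the parts as sign loci of a product of two linear forms and using the uniform convergence of normalized defining forms on the sphere, one gets for each $\varepsilon>0$ the inclusion $P_i^{(n)}\subseteq\overline{(P_i)_\varepsilon}$ for all large $n$, where $(P_i)_\varepsilon$ is the open $\varepsilon$-neighborhood of $P_i$. By the Portmanteau theorem $\limsup_n\mu_n(P_i^{(n)})\le\limsup_n\mu_n\big(\overline{(P_i)_\varepsilon}\big)\le\mu\big(\overline{(P_i)_\varepsilon}\big)$, and letting $\varepsilon\downarrow 0$ (so that $\overline{(P_i)_\varepsilon}\downarrow P_i$, as $P_i$ is closed) yields $\mu(P_i)\ge\limsup_n\mu_n(P_i^{(n)})\ge 1/c$, which is the assertion for the pair $(H_1,H_2)$.

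The step needing extra care is the degenerate case $H_1=H_2=:H$, which occurs exactly when $V\cup W$ fails to span $\RP^d$: then the two parts are $H$ and all of $\RP^d$, so only $\mu(H)\ge 1/c$ has content. Again we approximate, choosing $H_2^{(n)}\supseteq W_n$ with $H_2^{(n)}\to H$: either $H_2^{(n)}\neq H$ for all large $n$, and then the thin part between $H$ and $H_2^{(n)}$ eventually lies in every neighborhood of $H$, so the Portmanteau estimate above with $P_i$ replaced by $H$ gives $\mu(H)\ge 1/c$; or $H_2^{(n)}=H$ infinitely often, which forces $W_n\subseteq H$ along that subsequence and then $\mu(H)\ge\limsup_n\mu_n(H)\ge 1/c$ directly. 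I expect this degenerate bookkeeping, together with verifying that the inclusions $P_i^{(n)}\subseteq\overline{(P_i)_\varepsilon}$ hold uniformly once $n$ is large (which reduces to uniform convergence of the normalized linear forms on $S^d$ and compactness of $\RP^d$), to be the only genuinely delicate points; the remainder is routine.
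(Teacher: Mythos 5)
Your proposal is correct and follows exactly the route the paper intends: the paper derives Theorem~\ref{projective-cpt-mes} from the discrete Theorem~\ref{projective-cpt} ``by an approximation argument'' (the same compactness-of-the-Grassmannian limit argument is spelled out for Corollary~\ref{projective-ctt}). Your write-up merely fills in the standard weak-$*$/Portmanteau details, including the degenerate case $H_1=H_2$, all of which check out.
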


\begin{remark}
If $\mu$ is absolutely continuous then the case $W\cap V\neq \emptyset$ is automatically excluded, because $H_1$ and $H_2$ must not coincide in this case.
\end{remark}

The rest of the paper is organized as follows:
We start with the proof of Theorem~\ref{projective-cpt} in Section \ref{cpr-proof-sec}.
In Section \ref{secTverbergInterpolationWithDual} we analogously interpolate between Tverberg's theorem and its dual.
In Section~\ref{trans-sec} we state a very general theorem incorporating almost all that we know about (dual, transversal, constrained, colorful) Tverberg type theorems.
In particular this implies a projective center transversal theorem, which generalizes Theorem \ref{projective-cpt-mes}, see Section \ref{secProjCenterTransversalThm}.
Exchanging quantors can possibly lead to other interesting theorems.
As an instance of this we prove another projective Tverberg theorem and a transversal generalization in Sections \ref{secAnotherProjTver} and \ref{secTranversalGeneralizationOfTverberg}.

\vspace*{3mm}
\noindent {\bf Open problems.}
Some of our theorems need technical assumptions in order to assure that certain topological obstructions do not vanish and such that we are able to prove that. These include the usual prime power assumption in Tverberg type theorems, as well as more specific assumptions in the results of Sections~\ref{secAnotherProjTver} and \ref{secTranversalGeneralizationOfTverberg}. It would be very interesting to know in which cases these assumptions are necessary and in which cases they can be avoided.

Moreover, Sections \ref{secAnotherProjTver} and \ref{secTranversalGeneralizationOfTverberg} already indicate that interesting versions and generalizations of our projective center point and Tverberg theorems might be obtained by exchanging quantors and by including additional subspaces on top of $V$ and $W$.
Is there a reasonable natural generalization incorporating all results of this paper? 

\vspace*{3mm}
\noindent {\bf Acknowledgements.}
We thank Karim Adiprasito, Pavle \blagojevic, Moritz Firsching, Peter Landweber, Louis Theran, and G\"{u}nter Ziegler for discussions.

%%%%%%%%%%%%%%%%%%%%%%%%%%%%%%%%%%%%%%%%%%%%%%%%%%%%%%%%%%%
\section{Proof of the projective center point theorem} \label{cpr-proof-sec}
%%%%%%%%%%%%%%%%%%%%%%%%%%%%%%%%%%%%%%%%%%%%%%%%%%%%%%%%%%%

In order to prove Theorem~\ref{projective-cpt} we first dualize it projectively (and interchange $v$ and $d-v-1$):
First note that dual to Definition \ref{defPartitioningRPd}, any two points $x,y\in\RP^d$ partition the space of hyperplanes in $\RP^d$ into two pieces, which are characterized by where a hyperplane intersects the (or a) line $\ell$ through $x$ and $y$, since $\ell$ gets partitioned into two closed parts by $x$ and $y$ as well.

\begin{theorem}
Suppose that $V\subset \mathbb RP^d$ is a projective subspace of dimension $v$ and $\Xi$ is a family of hyperplanes with $|\Xi|=n$. Put
$$
r = \left\lceil\frac{n}{(d-v)(v+1)+1}\right\rceil.
$$
Then there exists a projective subspace $W\subset \mathbb RP^d$ of dimension $d-v-1$ such that the following condition holds: For a pair of points $x\in V$ and $y\in W$ and any line $\ell\supseteq\{x,y\}$ the pair $(x,y)$ partitions the line $\ell$ into two parts and any of these parts intersects at least $r$ members of $\Xi$.
\end{theorem}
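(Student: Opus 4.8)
The plan is to cast this as a center-point/Helly-type problem in a parameter space of candidate subspaces $W$, in the spirit of the proof of the classical center point theorem. As a preliminary, a routine perturbation-and-compactness argument reduces us to the case where $\Xi$, $V$ and the sought $W$ are in general position, so that $x\notin\xi_i$ and $y\notin\xi_i$ for the relevant $x\in V$, $y\in W$; the general statement, including the possibility $W\cap V\ne\emptyset$, is then recovered by passing to a limit of such $W$'s. So general position is assumed from now on.

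Fix a splitting $\RR^{d+1}=U_V\oplus U_0$ with $U_V$ the $(v{+}1)$-dimensional subspace over $V$ and $U_0$ a fixed complement of dimension $d-v$. Every $(d{-}v{-}1)$-dimensional projective subspace $W$ disjoint from $V$ is the projectivization of the graph $\{(\psi w,w):w\in U_0\}$ of a unique linear map $\psi\colon U_0\to U_V$, so the candidate subspaces form an affine space $\Psi\cong\RR^{(v+1)(d-v)}$ of dimension exactly $m-1$, where $m=(d-v)(v+1)+1$ and $r=\lceil n/m\rceil$. Next translate the partition condition into signs: writing $\xi_i=\{[z]:\langle n_i,z\rangle=0\}$, for $x=[a]\in V$ and $y=[b]\in W$ the line $\overline{xy}$ is cut by $\{x,y\}$ into the two arcs $A_\pm=\{[sa+tb]:\pm st>0\}$, and the unique point of $\xi_i\cap\overline{xy}$ lies in $A_+$ resp.\ $A_-$ according to whether $\langle n_i,a\rangle$ and $\langle n_i,b\rangle$ have opposite resp.\ equal signs. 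Hence $W$ has the required property if and only if for all $x=[a]\in V$ and $y=[b]\in W$ the number of $i$ with $\langle n_i,a\rangle\langle n_i,b\rangle<0$ lies in $[r,n-r]$; and if $b=(\psi w,w)$ with $n_i=(n_i',n_i'')$ then $\langle n_i,b\rangle=\langle n_i',\psi w\rangle+\langle n_i'',w\rangle$, so for fixed $a$ and $w$ every single inequality $\langle n_i,a\rangle\langle n_i,b\rangle<0$ is a linear inequality in $\psi$.

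It remains to find $\psi\in\Psi$ such that no triple $(x=[a],y=[b],S)$ with $|S|=n-r+1$ satisfies $\langle n_i,a\rangle\langle n_i,b\rangle<0$ for all $i\in S$ --- the case where the complementary arc $A_-$ is too light is absorbed by replacing $a$ with $-a$. The relevant arithmetic is exactly that of the classical center point theorem: $r=\lceil n/m\rceil$ satisfies $m(r-1)<n$, which is what makes any $m$ of these ``bad'' constraints jointly unsatisfiable, as in the usual Helly proof; the dimension $m-1$ of $\Psi$ then matches the Helly number $m$. What is \emph{not} automatic is the convexity needed to run Helly directly: for fixed $a$ and $w$, the set of ``good'' $\psi$ consists of those cells of an affine hyperplane arrangement in $\Psi$ on which at least $r$ of the numbers $\langle n_i,a\rangle\langle n_i,b\rangle$ (with $b=(\psi w,w)$) are positive, which is not convex in general. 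I would try to repair this by reformulating via Gordan's lemma: a triple as above fails to exist precisely when $0\in\conv\{\epsilon_i(\psi^{\mathrm T}n_i'+n_i''):i\in S\}$ for every $S$ of size $n-r+1$ and every sign pattern $\epsilon=(\sgn\langle n_1',u\rangle,\dots,\sgn\langle n_n',u\rangle)$ with $u\in U_V$; this has the flavour of a center-transversal statement --- a $(v{+}1)$-dimensional linear subspace of $\RR^{d+1}$, determined by $\psi$, meeting a prescribed family of ``large'' convex hulls --- but the individual constraint loci in $\psi$ are still not convex, so a Brouwer/degree computation on $\Psi$ is the natural fallback.

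Organizing this last step is the point I expect to be the main obstacle. The bad constraints are indexed by the continuum of pairs $(x,y)$, not by a finite set, and the position of $y$ is coupled to the very subspace $W$ being chosen. A rigorous argument must reduce $x$ to its finitely many combinatorial types in the arrangement $\{V\cap\xi_i\}_i$, reduce the admissible positions of $y$ to finitely many critical ones, and then establish the right convexity (or an appropriate degree nonvanishing) for the resulting finite system. As a consistency check, the extremes $v=0$ and $v=d-1$ both give $m=d+1$, and in a suitable affine chart of $\RP^d$ the theorem specializes to the classical center point theorem and to the dual center point theorem of~\cite{kar2008dcpt} respectively, with the argument above reducing to their standard Helly proofs.
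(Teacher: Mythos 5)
Your setup is sound as far as it goes --- the parameter space $\Psi\cong\RR^{(d-v)(v+1)}$ of complements to $V$, the sign characterization of which arc of $\ell$ a given hyperplane meets, and the observation that the resulting ``good'' sets in $\Psi$ are not convex are all correct --- but the proof stops exactly where the real work begins. You never resolve the non-convexity: the two fallbacks you name (a Gordan-type reformulation, an unspecified Brouwer/degree computation on $\Psi$) are left as hopes rather than arguments, and the difficulty you yourself identify --- that the bad constraints are indexed by a continuum of pairs $(x,y)$ whose second member is coupled to the unknown $W$ --- is never overcome. As written this is a correct reduction to a problem you do not solve.

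The paper's key idea, absent from your outline, is to parametrize candidate subspaces not by $\Psi$ directly but by the simplex $\Delta^{n-1}$ of convex weights on the hyperplanes: to $p=(t_1,\dots,t_n)$ one assigns the positive definite form $q_{p,\eps}(x)=\sum_i t_i\lambda_i(x)^2+\eps\langle x,x\rangle$ and lets $\widehat W_{p,\eps}$ be the $q_{p,\eps}$-orthogonal complement of $\widehat V$ in $\RR^{d+1}$. This gives a continuous map $\Delta^{n-1}\to\RR^D$ with $D=(d-v)(v+1)$, and the topological center point theorem of~\cite{kar2010cpt} yields a single $\widehat W_\eps$ lying in the image of \emph{every} face of $\Delta^{n-1}$ spanned by $n-r+1$ vertices; one then lets $\eps\to 0$. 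The verification is local and elementary: if some arc $P_1$ of the plane $\widehat\ell$ spanned by $\widehat x\subseteq\widehat V$ and $\widehat y\subseteq\widehat W$ met at most $r-1$ hyperplanes, the remaining $n-r+1$ would form a face $F$, and $q_F|_{\widehat\ell}$ would be a convex combination of forms $(a_ix+b_iy)^2$ with $a_ib_i>0$, forcing a strictly positive $q_F$-inner product between $\widehat x$ and $\widehat y$ and contradicting orthogonality. This device --- trading the non-convex constraint sets in $\Psi$ for face-intersection in the simplex of weights, with the topological center point theorem supplying the pigeonhole --- is precisely the missing step; a direct Helly or degree argument in $\Psi$ along the lines you sketch is not known to work.
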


The general position assumption now becomes: No $r$ hyperplanes of $\Xi$ have a common point in $V$. In the following proof it will guarantee that $V$ and $W$ have no common point and the line $\ell$ is uniquely determined by the pair of points $x$ and $y$.

Let us go to the homogeneous coordinate space $\mathbb R^{d+1}$ with the corresponding subspace $\widehat V$ of dimension $v+1$ and a family of hyperplanes $\widehat \Xi$ given by homogeneous linear equations $\lambda_i(x)=0$ for $i=1,\ldots, n$. We have to find an appropriate subspace $\widehat W$ of dimension $d-v$.

Consider the formal simplex $\Delta^{n-1}$ with vertices indexed by $\{1, \ldots, n\}$. A point $p\in \Delta^{n-1}$ is an $n$-tuple of nonnegative reals $p=(t_1, \ldots, t_n)$ that sum to 1.
Choose an $\eps>0$ and consider the following quadratic form on $\mathbb R^{d+1}$:
$$
q_{p,\eps}(x) = \sum_{i=1}^n t_i \lambda_i(x)^2 + \eps\langle x,x\rangle,
$$
where $\langle x,x\rangle$ is the standard quadratic form on $\mathbb R^{d+1}$.
We would prefer to put $\eps=0$ but have to consider positive $\eps$ in order to make sure that $q_{p,\eps}(x)$ is positive definite for any $p\in\Delta^{n-1}$.

Let $\widehat W_{p,\eps}\subset\RR^{d+1}$ be the subspace that is orthogonal to $\widehat V$ with respect to the scalar product defined by $q_{p,\eps}(x)$.
Obviously $\widehat W_{p,\eps}$ depends continuously on $p\in \Delta^{n-1}$ and $\eps>0$ and $\widehat W_{p,\eps}$ always has trivial intersection with $\widehat V$. Note that the space of all possible linear subspaces $\widehat W$ of dimension $d-v$ having trivial intersection with $\widehat V$ is a Schubert cell in the Grassmannian $G(d+1, d-v)$, which is homeomorphic to $\RR^D$, $D=(d-v)(v+1)$.
Thus for a fixed $\eps>0$ we get a map $\Delta^{n-1}\to\RR^D$ that sends $p$ to $\widehat W_{p,\eps}$.

Now we invoke the topological center point theorem from K.~\cite{kar2010cpt}, which states that any continuous map from $\Delta^{n-1}$ to a metric space of covering dimension $D$, such as $\RR^D$, has the property that the intersection of the images of all faces of codimension $\left\lceil\frac{n}{D+1}\right\rceil-1$ in $\Delta^{n-1}$ is non-empty.

Thus in our situation we find a subspace $\widehat W_\eps$ with the following property: For any face $F\in \Delta^{n-1}$ of dimension at least $n-r$ (i.e. spanned by $n-r+1$ vertices of $\Delta^{n-1}$) there exists $p_{F,\eps}\in F$ such that $\widehat W_\eps=\widehat W_{p_{F,\eps},\eps}$.

By a compactness argument we can choose a sequence $\eps_m\to +0$ and corresponding points $p_{F,\eps_m}$ such that the respective $\widehat W_{\eps_m}$ converge to some $\widehat W$ in the Grassmannian topology, the points $p_{F,\eps_m}$ tend to some points $p_F$, and so do the quadratic forms $q_{p_{F,\eps_m},\eps_m}\to q_F$.

Consider a pair of lines $\widehat x\subseteq \widehat V$ and $\widehat y\subseteq \widehat W$ (all lines and subspaces in $\mathbb R^{d+1}$ are linear and pass through the origin). Let $\widehat\ell$ be a two-dimensional plane containing them both. The lines $\widehat x$ and $\widehat y$ break $\widehat\ell$ projectively in two closed parts: $P_1$ and $P_2$. We have to show that any of them intersects at least $r$ members of $\widehat \Xi$ nontrivially. Assume the contrary: a subfamily of $n-r+1$ hyperplanes (denote it $F$) intersect $P_1$ only at the origin; this means that every corresponding $\lambda_i$ is nonzero on $\widehat\ell$ and its zero set (minus the origin) is contained in the interior of $P_2$.

Consider now $F$ as a face of $\Delta^{n-1}$ and take its corresponding $p_F$ and $q_F$. The lines $\widehat x$ and $\widehat y$ must be orthogonal with respect to $q_F$ because the relation ``orthogonal'' depends continuously on everything. The quadratic form $q_F$ may be degenerate but it is nonzero on $\widehat\ell$, because all the corresponding hyperplanes intersect $\widehat\ell$ transversally. Consider two cases:

\begin{itemize}
\item Case~1. The lines $\widehat x$ and $\widehat y$ are different. Let them be the $x$ and the $y$ axes and let the interior of $P_2$ be defined by $xy<0$.
Then $q_F|_{\widehat\ell}$ is a convex combination of the forms $(a_ix+b_iy)^2$ with positive $a_i$ and $b_i$, $i\in F$. The corresponding scalar product between $(1,0)$ and $(0,1)$ is therefore $\sum_{i\in F} t_i a_ib_i>0$, which is a contradiction.
\item Case~2. The lines $\widehat x$ and $\widehat y$ coincide.
Since $\widehat x\perp_{q_F}\widehat y$, every $\lambda_i$ whose square comes with a positive coefficient $t_i$ in $q_F$ must be zero on $\widehat x=\widehat y$, and thus the corresponding hyperplane in $\widehat\Xi$ intersects both parts $P_1=\widehat x=\widehat y$ and $P_2=\widehat\ell$ nontrivially.
\end{itemize}

It remains to verify that the general position assumption implies $\widehat V\cap\widehat W = 0$.
Assuming the contrary, there are lines $\widehat x\subseteq\widehat V$ and $\widehat y\subseteq\widehat W$ with $\widehat x=\widehat y$.
For any $(n-r)$-face $F$ of $\Delta^{n-1}$ the above Case~2 implies that at least one hyperplane of $\widehat\Xi$ corresponding to a vertex of $F$ contains $\widehat x=\widehat y$.
Hence at least $r$ hyperplanes of $\widehat\Xi$ contain $\widehat x\subseteq \widehat V$ and this is exactly what we exclude by the general position assumption.
\qed

%%%%%%%%%%%%%%%%%%%%%%%%%%%%%%%%%%%%%%%%%%%%%%%%%%%%%%%%%%%
\section{A projective Tverberg theorem} \label{secTverbergInterpolationWithDual}
%%%%%%%%%%%%%%%%%%%%%%%%%%%%%%%%%%%%%%%%%%%%%%%%%%%%%%%%%%%

Instead of the topological center point theorem from~\cite{kar2010cpt} we could invoke the topological Tverberg theorem from~\cite{bss1981,oza1987,vol1996} (see also~\cite{mat2003}) to obtain the following:

\begin{theorem}
\label{projective-tver}
Suppose that $V\subset \mathbb RP^d$ is a projective subspace of dimension $v$ and $X$ is a finite point set with $|X|=(D+1)(r-1)+1$, where $D=(d-v)(v+1)$ and $r$ is a prime power.

Then there exists a projective subspace $W\subset \mathbb RP^d$ of dimension $d-v-1$ and a partition of $X$ into $r$ subfamilies $X_j$ $(j=1,\ldots,r)$ such that any pair of hyperplanes $H_1\supseteq V$ and $H_2\supseteq W$ partitions $\mathbb RP^d$ into two parts so that each part has nonempty intersection with every $X_j$.
\end{theorem}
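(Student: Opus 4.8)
The plan is to rerun the proof of Section~\ref{cpr-proof-sec} almost verbatim, feeding the topological Tverberg theorem into the step where the topological center point theorem was used. First I would dualize Theorem~\ref{projective-tver} projectively, interchanging $v$ and $d-v-1$ exactly as in Section~\ref{cpr-proof-sec}; this reduces the claim to a statement about hyperplanes: for a projective subspace $V\subset\RP^d$ of dimension $v$ and a family $\Xi$ of $n=(D+1)(r-1)+1$ hyperplanes, there exist a subspace $W$ of dimension $d-v-1$ and a partition $\Xi=\Xi_1\sqcup\dots\sqcup\Xi_r$ such that for every $x\in V$, $y\in W$ and every line $\ell\supseteq\{x,y\}$ the pair $(x,y)$ partitions $\ell$ into two parts, each of which intersects at least one member of every $\Xi_j$. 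Passing to homogeneous coordinates I obtain $\widehat V$ of dimension $v+1$, linear forms $\lambda_1,\dots,\lambda_n$, and, exactly as before, for $p=(t_1,\dots,t_n)\in\Delta^{n-1}$ and $\eps>0$ the positive definite form $q_{p,\eps}(x)=\sum_i t_i\lambda_i(x)^2+\eps\langle x,x\rangle$, its $q_{p,\eps}$-orthogonal complement $\widehat W_{p,\eps}$ of $\widehat V$, and the continuous map $p\mapsto\widehat W_{p,\eps}$ from $\Delta^{n-1}$ into the Schubert cell of $(d-v)$-subspaces transverse to $\widehat V$, which is homeomorphic to $\RR^D$ with $D=(d-v)(v+1)$.

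The only new ingredient is that, in place of \cite{kar2010cpt}, I would apply the topological Tverberg theorem \cite{bss1981,oza1987,vol1996} to this map $\Delta^{n-1}\to\RR^D$: since $n-1=(D+1)(r-1)$ and $r$ is a prime power, it yields $r$ pairwise disjoint faces of $\Delta^{n-1}$ whose images under the map share a common point $\widehat W_\eps$. Absorbing any unused vertices into one of these faces (which only enlarges its image, hence keeps $\widehat W_\eps$ in the intersection) I may assume the faces $F_1,\dots,F_r$ have vertex sets $I_1,\dots,I_r$ partitioning $\{1,\dots,n\}$, and for each $j$ there is a point $p_{j,\eps}\in F_j$ with $\widehat W_{p_{j,\eps},\eps}=\widehat W_\eps$. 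Then, as in Section~\ref{cpr-proof-sec}, I would let $\eps=\eps_m\to+0$; since there are only finitely many partitions of $\{1,\dots,n\}$ I may first pass to a subsequence along which $(I_j)$ is constant, and then by compactness arrange that $\widehat W_{\eps_m}\to\widehat W$, $p_{j,\eps_m}\to p_j\in F_j$, and $q_{p_{j,\eps_m},\eps_m}\to q_j=\sum_{i\in I_j}(p_j)_i\lambda_i^2$ with $\sum_{i\in I_j}(p_j)_i=1$. The output subspace is the one corresponding to $\widehat W$, and $\Xi_j$ is taken to be the subfamily of $\Xi$ indexed by $I_j$.

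The verification that this $W$ and this partition work is then word for word the case analysis of Section~\ref{cpr-proof-sec}, with the face $F$ there played by the block $I_j$. Fixing $j$, lines $\widehat x\subseteq\widehat V$ and $\widehat y\subseteq\widehat W$, a $2$-plane $\widehat\ell$ containing both, and the two parts $P_1,P_2$ into which $\widehat x,\widehat y$ break $\widehat\ell$, the relation $\widehat x\perp_{q_j}\widehat y$ survives the limit because orthogonality is a closed condition. Suppose some part, say $P_1$, were intersected only at the origin by every hyperplane of $\widehat\Xi_j$; then each $\lambda_i$ with $i\in I_j$ is nonzero on $\widehat\ell$ with zero line interior to $P_2$. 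Case~1 ($\widehat x\neq\widehat y$): writing them as the coordinate axes with the interior of $P_2$ given by $xy<0$, the form $q_j|_{\widehat\ell}$ is a convex combination of squares $(a_ix+b_iy)^2$ with $a_i,b_i>0$, so the scalar product of $(1,0)$ and $(0,1)$ equals $\sum_{i\in I_j}(p_j)_i a_ib_i>0$, contradicting $\widehat x\perp_{q_j}\widehat y$. Case~2 ($\widehat x=\widehat y$): here $P_2=\widehat\ell$ is met by every hyperplane automatically, and $q_j|_{\widehat x}=0$ forces $\lambda_i(\widehat x)=0$ for every $i\in I_j$ with $(p_j)_i>0$ --- of which there is at least one, since $\sum_{i\in I_j}(p_j)_i=1$ --- so that hyperplane contains $\widehat x=P_1$; either way we reach a contradiction. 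This establishes the dual statement, and dualizing back gives Theorem~\ref{projective-tver}.

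I do not expect a genuine obstacle here: all of the geometry already lives in Section~\ref{cpr-proof-sec}, and the proof differs only in which black-box topological theorem is quoted. The two points that need (routine) care are the passage $\eps\to0$ --- harmless because every condition in play ($q_j$-orthogonality, a zero line lying in a closed part of $\widehat\ell$, membership in a face) is closed --- and the elementary combinatorial step of enlarging the $r$ disjoint Tverberg faces to a partition of all of $X$. The prime power hypothesis on $r$ is used only through the topological Tverberg theorem, which is precisely why it is singled out among the open problems.
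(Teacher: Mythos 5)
Your proposal is correct and follows essentially the same route as the paper: dualize projectively, reuse the map $p\mapsto\widehat W_{p,\eps}$ from Section~\ref{cpr-proof-sec}, replace the topological center point theorem by the topological Tverberg theorem to get $r$ points with pairwise disjoint supports mapping to a common $\widehat W_\eps$, pass to the limit $\eps\to 0$, and rerun the same two-case orthogonality argument. The only additions beyond the paper's (very terse) proof are routine bookkeeping --- distributing unused vertices among the blocks and fixing the partition along a subsequence before taking limits --- which the paper leaves implicit.
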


\begin{remark}
An additional general position assumption ``no $r$ points of $X$ together with $V$ are contained in a hyperplane'' will again guarantee that $W$ may be chosen disjoint from $V$.
\end{remark}

\begin{remark}
The number of partitions of $X$ into $r$ subfamilies such that there exist a $W$ as in the theorem can be bounded from below as in \vucic--\zivaljevic~\cite{vucziv1993} (if $r$ is a prime) and in Hell~\cite{hell2007} (for $r=p^\ell$) by $\frac{1}{(r-1)!}\left(\frac{r}{\ell+1}\right)^{\ceil{(r-1)(d+1)/2}}$.
\end{remark}

\begin{remark}
There is also a colorful extension as in \blagojevic--M.--Ziegler~\cite{bmz2009ct} (compare with Remark~\ref{remColorfulProjectiveTransversalTverberg}) and ``constraint Tverberg''-extensions as in Hell \cite{hell2008} and Engstr\"om--Nor\'en \cite{engnor2011}.
\end{remark}

\begin{proof}
We start as in Section~\ref{cpr-proof-sec} by dualizing this statement and considering the continuous map from $\Delta^{n-1}$ ($n=(D+1)(r-1)+1$) to subspaces $\widehat W_{p,\eps}$. By the topological Tverberg theorem~\cite{bss1981,oza1987,vol1996} there exist $r$ points $p_1,\ldots, p_r\in \Delta^{n-1}$ with disjoint supports (corresponding to subfamilies $\Xi_j$) such that their corresponding subspaces $\widehat W_{p_j,\eps}$ coincide.

After that we pass to the limit $\eps_m\to 0$ and repeat the same arguments.
The degenerate case $\widehat V\cap \widehat W\neq 0$ is possible only when every $\Xi_j$ has a hyperplane containing the same common line $\widehat x\in \widehat V\cap \widehat W$. So again it is excluded by the general position assumption.
\end{proof}

%%%%%%%%%%%%%%%%%%%%%%%%%%%%%%%%%%%%%%%%%%%%%%%%%%%%%%%%%%%
\section{A transversal projective Tverberg theorem} \label{trans-sec}
%%%%%%%%%%%%%%%%%%%%%%%%%%%%%%%%%%%%%%%%%%%%%%%%%%%%%%%%%%%

Tverberg and \vrecica~\cite{tv1993grthst} conjectured a common generalization of the ham sandwich theorem, the center transversal theorem, and Tverberg's theorem, which they proved in some special cases. Later \zivaljevic~\cite{ziv1999}, \vrecica~\cite{vre2003}, K.~\cite{kar2007ttc}, and \blagojevic--M.--Ziegler~\cite{bmz2011ctv} proved further special cases and colorful extensions.

The following theorem and the subsequent Remark~\ref{remColorfulProjectiveTransversalTverberg} generalize all those results (which accumulate to the case $v=d-1$ below) and Theorem~\ref{projective-tver} (which is the special case $m=1$).

\begin{theorem}
\label{thmTransversalProjectiveTverberg}
Let $d > v\geq 0$, $m\geq 1$, $d>w:=m(d-v)-1$, $D:=(d-v)(d-w)$, and let $p$ be a prime.
Suppose that $p=2$ or $m=1$ or $d-w$ is even. Let $r_1=p^{\alpha_1},\ldots,r_m=p^{\alpha_m}$ be powers of~$p$.

Suppose that $X^1,\ldots,X^m$ are $m$ point sets in $\RP^d$ of size $|X^j| = (D+1)(r_j-1)+1$ and $V\subset\RP^d$ is a projective subspace of dimension~$v$.

Then there exists a projective subspace $W\subset\RP^d$ of dimension $w$ and partitions of each $X^j$ into $r_j$ pieces respectively,
\[
X^j = X_1^j\dotcup\ldots\dotcup X_{r_j}^j,
\]
such that any pair of hyperplanes $H_1\supseteq V$ and $H_2\supseteq W$ partitions $\RP^d$ into two closed parts so that each part has nonempty intersection with every~$X^j_i$.
\end{theorem}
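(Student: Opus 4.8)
The plan is to follow exactly the pattern of Sections~\ref{cpr-proof-sec} and \ref{secTverbergInterpolationWithDual}: first dualize projectively, then reduce the existence of $W$ together with the partitions to a topological transversal Tverberg theorem applied to a suitable map, and finally pass to a limit $\eps\to 0$ to remove an auxiliary nondegeneracy term. Projective duality turns the point sets $X^j$ into families $\Xi^j$ of hyperplanes and interchanges the roles of $V$ (dimension $v$) and $W$ (dimension $w$): in homogeneous coordinates $\RR^{d+1}$ we seek a linear subspace $\widehat W$ of dimension $w+1$, and the condition ``$H_1\supseteq V$, $H_2\supseteq W$ partition into two parts meeting every $X^j_i$'' becomes, as in Section~\ref{cpr-proof-sec}, a statement about $q$-orthogonality of a line $\widehat x\subseteq\widehat V$ and a line $\widehat y\subseteq\widehat W$ forcing, on each $2$-plane $\widehat\ell\supseteq\{\widehat x,\widehat y\}$, that each of the two parts meets at least one hyperplane from each block of each partition.

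The key point is to set up the right parameter space and map. For each $j$ and each $p^j=(t^j_1,\dots,t^j_{n_j})\in\Delta^{n_j-1}$ (where $n_j=(D+1)(r_j-1)+1$) and each $\eps>0$, form the positive-definite quadratic form $q_{p^j,\eps}(x)=\sum_i t^j_i\lambda^j_i(x)^2+\eps\langle x,x\rangle$ and let $\widehat W_{p^j,\eps}$ be the $q_{p^j,\eps}$-orthogonal complement of $\widehat V$; this is a $(w+1)$-dimensional subspace meeting $\widehat V$ trivially, hence a point of the Schubert cell $\cong\RR^{D}$ with $D=(d-v)(v+1)$. Wait — we must be careful: the subspace $\widehat W$ we want has dimension $w+1$, not $d-v$, so the relevant Schubert cell of $(w+1)$-subspaces transverse to the $(v+1)$-space $\widehat V$ has dimension $(w+1)(d-w)$. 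One must check this equals the $D=(d-v)(d-w)$ of the theorem statement via $w+1=m(d-v)$, so $(w+1)(d-w)=m(d-v)(d-w)$; this is an $m$-fold product of the single-transversal dimension $(d-v)(d-w)$, which is precisely why the transversal version needs $m$ copies of the map and lands in $(\RR^{D})^m=\RR^{mD}$. So the construction is: $f_\eps\colon \Delta^{n_1-1}\times\cdots\times\Delta^{n_m-1}\to (\RR^{D})^m$ sending $(p^1,\dots,p^m)$ to the tuple of orthogonal complements; the transversal Tverberg condition we need is the existence of $r_j$ points in $\Delta^{n_j-1}$ with pairwise disjoint supports for each $j$ whose images all agree in each of the $m$ coordinates simultaneously with a single subspace $\widehat W_\eps$ of dimension $w+1$.

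The main obstacle, and the place where the hypotheses ``$p=2$ or $m=1$ or $d-w$ even'' and the size formula enter, is invoking the correct topological transversal Tverberg theorem: one needs a result asserting that for any continuous map from the product of skeleta/deleted joins of the $\Delta^{n_j-1}$ into $\RR^{mD}$ there is a common Tverberg partition, and the known such results (\zivaljevic, \vrecica, K.~\cite{kar2007ttc}, \blagojevic--M.--Ziegler~\cite{bmz2011ctv}) require exactly these arithmetic restrictions and the dimension count $n_j=(D+1)(r_j-1)+1$. I would cite the appropriate one of these and verify the numerology matches. After that, the limit argument $\eps_m\to+0$ is routine exactly as before: the forms $q_{p^j,\eps_m}$ and the subspace $\widehat W_{\eps_m}$ converge, orthogonality is a closed condition, and the two case analysis (the two lines $\widehat x,\widehat y$ distinct versus coinciding) runs verbatim as in Section~\ref{cpr-proof-sec} in each block $j$ separately — Case~1 gives the strict positivity contradiction $\sum_{i} t^j_i a^j_i b^j_i>0$, and Case~2 shows the degenerate situation $\widehat V\cap\widehat W\neq 0$ forces, for each $j$ and each block, a hyperplane through the common line, which is what a general position assumption would exclude.
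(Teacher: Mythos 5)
There is a genuine gap, and it sits exactly where you wave your hands. First, the dimension bookkeeping around the Schubert cell does not work. For $m\geq 2$ one has $(w+1)+(v+1)=m(d-v)+v+1>d+1$, so a $(w+1)$-dimensional linear lift of $W$ can never meet the $(v+1)$-dimensional $\widehat V$ trivially; there is no Schubert cell of the kind you describe, and indeed $V\cap W\neq\emptyset$ is unavoidable in the primal picture. Moreover, the $q_{p^j,\eps}$-orthogonal complement of $\widehat V$ in $\RR^{d+1}$ has dimension $d-v$, not $w+1$, so your map sends $(p^1,\dots,p^m)$ to an $m$-tuple of $(d-v)$-dimensional subspaces; asking that these agree on the thin diagonal of a \emph{fixed} $(\RR^{D})^m$ would force a single $(d-v)$-dimensional $\widehat W$, i.e.\ it reproduces the $m=1$ situation rather than producing a subspace of dimension $w+1=m(d-v)$. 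The numerical coincidence $(w+1)(d-w)=mD$ does not by itself assemble the $m$ complements into a well-defined test map into $\RR^{mD}$ whose diagonal preimage encodes the conclusion of Theorem~\ref{thmTransversalProjectiveTverberg}.

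Second, the ``correct topological transversal Tverberg theorem'' you propose to cite --- a statement about arbitrary continuous maps into a fixed Euclidean space $\RR^{mD}$ admitting a common Tverberg partition --- does not exist in that form; this is precisely the content that has to be proved. The paper instead passes to the dual space $(\RR^{d+1})^*$, where $\widehat V$ has dimension $d-v$ and the sought $\widehat W$ has dimension $d-w$, and parametrizes everything by the Grassmannian $B\cong G(v+1,d-w)$ of $(2d-v-w)$-dimensional subspaces $U\supseteq\widehat V$. The test maps then become $\Sigma_{r_j}$-equivariant \emph{bundle} maps over $B$ with target $\gamma^{\oplus r_j(d-v)}$, and the nonemptiness of $f_{1,\eps}^{-1}(\Delta)\cap\ldots\cap f_{m,\eps}^{-1}(\Delta)$ is obtained from a parametrized Borsuk--Ulam theorem whose hypothesis is $e(\gamma)^{(d-v)(m-1)}\neq 0$ in $H^*(B;\FF_p)$. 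This is exactly where the assumptions ``$p=2$ or $m=1$ or $d-w$ even'' are consumed (Stiefel--Whitney classes of the unoriented Grassmannian for $p=2$, the oriented Grassmannian with $\dim\gamma$ even for odd $p$, trivial for $m=1$). Your proposal defers all of this to a citation that cannot carry the weight; the $\eps\to 0$ limit and the two-case orthogonality analysis at the end are indeed routine, but only once the bundle-theoretic core is in place.
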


\begin{remark}
It is natural to conjecture the same statement also for all positive integers $r_j$ and without the technical assumption of $p=2$ or $m=1$ or $d-w$ being even.
\end{remark}

\begin{remark}
The assumption that no $r$ points of any $X^i$ together with $V$ are contained in a hyperplane will guarantee that $W$ and $V$ intersect transversally.
\end{remark}

\begin{remark}
\label{remColorfulProjectiveTransversalTverberg}
In the case when all $r_j=p$, Theorem \ref{thmTransversalProjectiveTverberg} has a colorful extension analogously to the colorful Tverberg--\vrecica\ theorem of \cite{bmz2011ctv}:
If for every $j$ we color the point set $X^i$ so that every color appears at most $r_j-1$ times, then the partitions of the sets $X^j$ can be made rainbow colored in the sense that every part $X^j_i$ uses every color at most once.
\end{remark}

\begin{proof}
Let $n_j:=|X^j|=(D+1)(r_j-1)+1$ $(1\leq j\leq m)$.
As above we regard $V$ as a linear $(v+1)$-dimensional subspace of $\RR^{d+1}$, denoted by~$V'$.
Let $\widehat V\subseteq (\RR^{d+1})^*$ be its dual space of dimension $d-v$ (this is the same as going to the projective dual statement of the theorem).
We have to find a certain $(w+1)$-dimensional subspace $W'\subseteq\RR^{d+1}$.
We will instead find its dual space $\widehat W\subseteq (\RR^{d+1})^*$ of dimension $d-w$.

Note that the dimension of $\widehat V$ and $\widehat W$ will sum up to $2d-v-w$.
Let $B$ be the set of all $(2d-v-w)$-dimensional subspaces $U\subseteq (\RR^{d+1})^*$ that contain~$\widehat V$.
Sending $U$ to its quotient $U/\widehat V$ gives us a bijection $B\homeo G(v+1,d-w)$, which we use to topologize~$B$. The Grassmannian here may be considered oriented or not oriented; we defer this choice till the end of the proof.

Let us fix a subspace $U\in B$, $j\in\{1,\ldots,m\}$ and an $\eps>0$.
As in the proof of Theorem \ref{projective-tver}, we want to find $r_j$ quadratic forms $q_{p_i,\eps}$ on $U$ such that $p_1,\ldots,p_{r_j}\in \Delta^{n_j-1}$ have pairwise disjoint supports and such that the orthogonal complement of $\widehat V$ in $U$ with respect to any $q_{p_i,\eps}$ is the same $(d-w)$-subspace of~$U$.
As in the usual configuration space/test map scheme for topological Tverberg-type theorems, see for example \matousek~\cite{mat2003}, the $r_j$-tuples of coefficient vectors $(p_i)_{1\leq i\leq r_j}$ of those quadratic forms are the preimages of the thin diagonal of a test-map
\[
f_{U,j,\eps}: (\Delta^{n_j-1})^{r_j}_{\Delta(2)} \toG{\Sg_{r_j}} (S_U)^{r_j},
\]
where $S_U$ is the Schubert cell in $G(2d-w-v,d-w)$ that consists of all $(d-w)$-subspaces of $U$ that intersect $\widehat V$ trivially; and $(\Delta^{n_j-1})^{r_j}_{\Delta(2)}$ denotes the $r_j$-fold pairwise deleted product of $\Delta^{n_j-1}$, which is defined as the union of products $F_1\times\ldots\times F_{r_j}$ of pairwise disjoint faces of $\Delta^{n_j-1}$.
The test-map is naturally $\Sg_{r_j}$-equivariant, where $\Sg_{r_j}$ denotes the symmetric group on $r_j$ elements.

Elements $\widehat W\in S_U$ can be uniquely written as kernels of linear projections $U\to \widehat V$.
When choosing an inner product on $U$, those projections are in bijection with maps $U/\widehat V\to \widehat V$, which are elements in $(U/\widehat V)^*\otimes \widehat V$.
Thus,
\begin{equation}
\label{eqSchubertCellForU}
S_U\iso (U/\widehat V)^*\otimes \widehat V \homeo \RR^D
\end{equation}
where the first isomorphism is natural in $U\in B$ if the inner products on the $U$'s are consistently the restriction of a fixed inner product on $\RR^{d+1}$.
The maps $f_{U,j,\eps}$ depend continuously on $U\in B$.
Let $\gamma:E(\gamma)\to B$ denote the tautological vector bundle over $B\homeo G(v+1,d-w)$ of rank $d-w$, its fibers being $U/\widehat V$. The union of the Schubert cells $S_U$ forms in a natural way a vector bundle over~$B$ ($S_U$ being the fiber over $U\in B$), which by \eqref{eqSchubertCellForU} is isomorphic to $(\gamma^*)^{\oplus(d-v)}\iso \gamma^{\oplus(d-v)}$.

Hence the collection of maps $f_{U,j,\eps}$ $(U\in B)$ forms a $\Sg_{r_j}$-equivariant bundle map $f_{j,\eps}$ over~$B$,
\[
\xymatrix{
B\times (\Delta^{n_j-1})^{r_j}_{\Delta(2)}\ar[rr]^{f_{j,\eps}} \ar[dr]_{pr_1} && E(\gamma)^{\oplus r_j(d-v)}\supseteq \Delta \ar[dl]^{\gamma^{\oplus r_j(d-v)}}\\
& B &.
}
\]
Here, $\Delta\iso E(\gamma^{\oplus(d-v)})$ denotes the thin diagonal subbundle, and we are interested in the preimages $f_{j,\eps}^{-1}(\Delta)$.
In particular we have to show that
\begin{equation}
\label{eqIntersectionOfPreimagesOfDelta}
f_{1,\eps}^{-1}(\Delta)\cap\ldots\cap f_{m,\eps}^{-1}(\Delta)\neq\emptyset.
\end{equation}
This can be proved exactly as in \cite{kar2007ttc} or \cite{bmz2011ctv} using a parametrized Borsuk--Ulam type theorem.
The only topological assumption that we need is that the mod-$p$-Euler class of $\Delta$, $e(\Delta)=e(\gamma)^{d-v}\in H^D(B;\FF_p)$, satisfies
\begin{equation}
\label{eqEulerClassOfBCondition}
e(\Delta)^{m-1}\neq 0.
\end{equation}
We know that $e(\Delta)^{m-1}=e(\gamma)^{(v+1)-(d-w)}\neq 0$ if $p=2$ and we consider the nonoriented Grassmannian (see Hiller \cite{hil1980}) or if $p>2$, $\dim \gamma = d-w$ is even, and we consider the oriented Grassmannian (see~\cite[Lemma~8]{kar2007ttc}). The same is trivially true if $m=1$, since then $B$ is a single point $\pt$ and the ``Euler class'' is the generator of $H^0(\pt; \FF_p)$. Finally, \eqref{eqEulerClassOfBCondition} is holds true if $p=2$ or $d-w$ is even or $m=1$.

As in the previous proofs, letting $s$ go to zero finishes the proof.
\end{proof}

%%%%%%%%%%%%%%%%%%%%%%%%%%%%%%%%%%%%%%%%%%%%%%%%%%%%%%%%%%%
\section{A projective center transversal theorem} \label{secProjCenterTransversalThm}
%%%%%%%%%%%%%%%%%%%%%%%%%%%%%%%%%%%%%%%%%%%%%%%%%%%%%%%%%%%

As an immediate corollary of Theorem \ref{thmTransversalProjectiveTverberg} we obtain a projective generalization of \dolnikov's \cite{dol1987,dol1992,dol1994} and \zivaljevic's \cite{zivvre1990} center transversal theorem (which is the special case $v=d-1$):

\begin{corollary}[Projective center transversal theorem]
\label{projective-ctt}
Let $d > v\geq 0$, $m\geq 1$, $d>w:=m(d-v)-1$, and $D:=(d-v)(d-w)$.

Suppose that $\mu_1,\ldots,\mu_m$ are $m$ probability measures on $\RP^d$
and $V\subset\RP^d$ is a projective subspace of dimension~$v$.

Then there exists a projective subspace $W\subset\RP^d$ of dimension $w$ such that any pair of hyperplanes $H_1\supseteq V$ and $H_2\supseteq W$ partitions $\RP^d$ into two closed parts $P_1$ and $P_2$ so that for any $j\in\{1,\ldots,m\}$,
\[
\mu_j(P_1), \mu_j(P_2) \geq \frac{1}{D+1}.
\]
\end{corollary}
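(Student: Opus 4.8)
The plan is to derive Corollary~\ref{projective-ctt} from Theorem~\ref{thmTransversalProjectiveTverberg} by the standard approximation argument that passes from a discrete (Tverberg-type) statement to its measure-theoretic (center-transversal-type) counterpart. First I would reduce to the case where each $\mu_j$ is a finitely supported measure with rational weights: given arbitrary probability measures $\mu_1,\ldots,\mu_m$ on the compact space $\RP^d$, choose for each $j$ a sequence of finitely supported probability measures $\mu_j^{(k)}$ converging weakly to $\mu_j$, and moreover arrange that each $\mu_j^{(k)}$ is the normalized counting measure of a finite multiset $X^{j,k}\subset\RP^d$ of size exactly $(D+1)(r_j^{(k)}-1)+1$ for a suitable prime power $r_j^{(k)}\to\infty$ (for fixed prime $p$, take $r_j^{(k)}=p^{k}$, so $|X^{j,k}|=(D+1)(p^k-1)+1$; to realize a prescribed approximating measure as such a multiset one splits each atom into the appropriate number of coincident points, which only improves the approximation). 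One must check the hypothesis $p=2$ or $m=1$ or $d-w$ even is satisfied — so fix $p=2$ when $m>1$ and $d-w$ is odd, and $p$ arbitrary otherwise; the parameters $d,v,m,w,D$ are exactly as in Theorem~\ref{thmTransversalProjectiveTverberg}.

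Next, apply Theorem~\ref{thmTransversalProjectiveTverberg} to each multiset family $X^{1,k},\ldots,X^{m,k}$: this produces a projective subspace $W_k\subset\RP^d$ of dimension $w$ and partitions $X^{j,k}=X_1^{j,k}\dotcup\cdots\dotcup X_{r_j^{(k)}}^{j,k}$ such that every pair of hyperplanes $H_1\supseteq V$, $H_2\supseteq W_k$ cuts $\RP^d$ into two closed parts each meeting every block $X_i^{j,k}$. The key quantitative point is a pigeonhole/counting estimate: since $X^{j,k}$ is partitioned into $r_j^{(k)}$ nonempty blocks, and each of the two closed parts $P_1,P_2$ of any admissible cut meets every block, each part contains at least $r_j^{(k)}$ points of $X^{j,k}$ (one from each block), hence
\[
\mu_j^{(k)}(P_1),\ \mu_j^{(k)}(P_2)\ \ge\ \frac{r_j^{(k)}}{(D+1)(r_j^{(k)}-1)+1}\ =\ \frac{1}{D+1}\cdot\frac{r_j^{(k)}}{r_j^{(k)}-1+\frac{1}{D+1}}\ \xrightarrow[k\to\infty]{}\ \frac{1}{D+1}.
\]
(Here $P_1,P_2$ are closed, so boundary points on $H_1$ or $H_2$ are counted in both, which is harmless.)

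Finally I would extract a convergent subsequence of the $W_k$ in the (compact) Grassmannian of $w$-dimensional projective subspaces of $\RP^d$, with limit $W$. It remains to verify that $W$ works for the original measures $\mu_j$. Fix any pair of hyperplanes $H_1\supseteq V$ and $H_2\supseteq W$ and let $P_1,P_2$ be the two closed parts they determine. One approximates $H_2$ by hyperplanes $H_2^{(k)}\supseteq W_k$ with $H_2^{(k)}\to H_2$; the corresponding closed parts $P_i^{(k)}$ satisfy $\mu_j^{(k)}(P_i^{(k)})\ge r_j^{(k)}/\big((D+1)(r_j^{(k)}-1)+1\big)$. Using weak convergence $\mu_j^{(k)}\to\mu_j$ together with the convergence of the regions, and the fact that the $P_i$ are closed (so one gets a $\limsup$ inequality of the form $\mu_j(P_i)\ge\limsup_k\mu_j^{(k)}(P_i^{(k)})$ after a standard argument with open neighborhoods, since the topological boundary of $P_i$ is contained in $H_1\cup H_2$ and can be absorbed into an arbitrarily small neighborhood), one concludes $\mu_j(P_1),\mu_j(P_2)\ge 1/(D+1)$.

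The main obstacle is the last step: making the limit argument clean requires care because the cutting configuration has two moving hyperplanes ($H_1$ ranges over all hyperplanes containing $V$, and $H_2$ must contain the limit space $W$), and one has to ensure that as $W_k\to W$ an arbitrary admissible $H_2\supseteq W$ can be approximated by admissible $H_2^{(k)}\supseteq W_k$ with the associated regions converging in a way compatible with the measures. One also has to handle the degenerate possibility $H_1=H_2$ (one piece being a hyperplane, the other all of $\RP^d$), in which case the inequality is trivial; and the possibility $W\cap V\ne\emptyset$, which the statement permits. These are routine but need to be spelled out; everything else is bookkeeping with the pigeonhole bound and weak convergence.
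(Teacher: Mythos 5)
Your proposal is correct and follows essentially the same route as the paper: approximate the measures by point sets of the prescribed cardinalities, take $p=2$ to sidestep the parity hypothesis of Theorem~\ref{thmTransversalProjectiveTverberg}, use the pigeonhole bound $r_j/|X^j|\ge 1/(D+1)$ (which, as you note, already holds for each fixed $k$, not only in the limit), and conclude by compactness of the Grassmannian and a weak-convergence limit argument. The paper compresses all of this into a few lines; you have merely spelled out the same steps in more detail.
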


\begin{proof}
We approximate the measures $\mu_j$ by finite point sets $X^j$ (as in the proof of Tverberg and \vrecica~\cite{tv1993grthst} that their conjecture implies the center transversal theorem).
We apply Theorem \ref{thmTransversalProjectiveTverberg}, avoiding the parity assumption of $d-w$ being even by simply taking $p=2$ and $r_j$ equal to appropriate powers of two. Both parts $P_1$ and $P_2$ will contain at least one point from every $X^j_i$ and thus at least $r_j$ points from every $X^j$. Since $r_j/|X^j| \geq 1/(D+1)$ and $G(d+1,w+1)$ is compact, a limit argument finishes the proof.
\end{proof}

\begin{remark}
The case $m=1$ could be called a projective center point theorem in analogy with Rado's center point theorem \cite{rado1947}, which is the special case $m=1$ and $v=d-1$.
Similarly, the case $m=d$ and $v=w=d-1$ is the ham sandwich theorem; compare with \blagojevic--K.~\cite[Thm. 10]{bk2010} for an actual projective mass partition theorem.
\end{remark}

%%%%%%%%%%%%%%%%%%%%%%%%%%%%%%%%%%%%%%%%%%%%%%%%%%%%%%%%%%%
\section{Another projective Tverberg theorem} \label{secAnotherProjTver}
%%%%%%%%%%%%%%%%%%%%%%%%%%%%%%%%%%%%%%%%%%%%%%%%%%%%%%%%%%%

We could also ask what happens if we allow in Theorem~\ref{projective-tver}, for example, to choose $V$ arbitrarily.
A partial answer is given by the following:

\begin{theorem}
\label{projective-tver2}
Let $0\le v < d$, $r_1$ and $r_2$ be the powers of the same prime $p$, and $X^1$ and $X^2$ be finite point sets with $|X^j|=(D+1)(r_j-1)+1$, where $D=(v+1)(d-v)$.
We further assume that $D$ is even and $\binom{\lfloor\frac{d+1}{2}\rfloor}{\lfloor\frac{v+1}{2}\rfloor}$ is nonzero mod $p$.

Then there exist projective subspaces $V, W\subset \mathbb RP^d$ of dimensions $v$ and $d-v-1$ and partitions of every $X^j$ into $r_j$ pieces
\[
X^j = X_1^j\dotcup\ldots\dotcup X_{r_j}^j,
\]
such that any pair of hyperplanes $H_1\supseteq V$ and $H_2\supseteq W$ partitions $\mathbb RP^d$ into two parts so that each part has nonempty intersection with every $X^j_i$.
\end{theorem}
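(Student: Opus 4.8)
The plan is to run the configuration space/test map scheme of Sections~\ref{cpr-proof-sec}--\ref{trans-sec}, but now parametrized over the \emph{full} Grassmannian of candidate subspaces $V$, rather than over a family of subspaces containing a fixed $V$ as in Theorem~\ref{thmTransversalProjectiveTverberg} (note that $W$ here has the complementary dimension $d-v-1$, so this theorem is not a special case of that one). Passing to homogeneous coordinates, write $\widehat\Xi^j=\{\lambda^j_i=0\}$ for the family of hyperplanes in $\RR^{d+1}$ dual to $X^j$, and look for a $(v+1)$-dimensional subspace $\widehat V$ together with a complementary $(d-v)$-dimensional subspace $\widehat W$. Let $G:=G(d+1,v+1)$, with tautological bundle $\gamma$ of rank $v+1$ and orthogonal complement $\gamma^\perp$ of rank $d-v$; fix once and for all an inner product on $\RR^{d+1}$ (the analogue of the $\eps\langle x,x\rangle$ term below). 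Over each $\widehat V\in G$ the Schubert cell $S_{\widehat V}$ of $(d-v)$-subspaces complementary to $\widehat V$ is an affine space $\iso\RR^D$, and exactly as in \eqref{eqSchubertCellForU} the union $\bigcup_{\widehat V}S_{\widehat V}$ is a vector bundle over $G$ isomorphic to $\gamma\otimes\gamma^\perp$. The crucial new point is that this is precisely the tangent bundle of the Grassmannian: $\gamma\otimes\gamma^\perp\iso\gamma^*\otimes\gamma^\perp\iso TG$, of rank $D=\dim G$. Finally, the quadratic-form construction $q_{p,\eps}(x)=\sum_i t_i\lambda^j_i(x)^2+\eps\langle x,x\rangle$ produces, for every $\widehat V\in G$, $p\in\Delta^{n_j-1}$ and $\eps>0$, a point $\widehat W_{\widehat V,p,\eps}\in S_{\widehat V}$ depending continuously on all the data.

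For each $j\in\{1,2\}$ and each $\eps>0$, a Tverberg partition of $X^j$ relative to a pair $(\widehat V,\widehat W)$ corresponds, just as in the proof of Theorem~\ref{projective-tver}, to a preimage of the thin diagonal under a $\Sg_{r_j}$-equivariant bundle test map over $G$,
\[
f_{j,\eps}\colon G\times(\Delta^{n_j-1})^{r_j}_{\Delta(2)}\toG{\Sg_{r_j}} E(\gamma\otimes\gamma^\perp)^{\oplus r_j}\supseteq\Delta,
\]
and such a preimage records in addition the common value $\widehat W\in S_{\widehat V}$. Write $Z_{j,\eps}:=f_{j,\eps}^{-1}(\Delta)$; it comes with a map $\phi_{j,\eps}\colon Z_{j,\eps}\to E(\gamma\otimes\gamma^\perp)$ sending a partition to the resulting pair $(\widehat V,\widehat W)$. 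Each $Z_{j,\eps}$ has expected dimension $D$, and by Theorem~\ref{projective-tver} applied fibrewise --- its hypotheses, $r_j$ a prime power and $|X^j|=(D+1)(r_j-1)+1$, being exactly ours --- the composite $Z_{j,\eps}\to G$ is surjective: over every $\widehat V$ there is at least one admissible $\widehat W$ for $X^j$. We need to produce a single point $(\widehat V,\widehat W)$ lying in both $\phi_{1,\eps}(Z_{1,\eps})$ and $\phi_{2,\eps}(Z_{2,\eps})$; forming the fibrewise difference inside the vector bundle $\gamma\otimes\gamma^\perp$, this amounts to a zero of a never-empty ``multivalued section'' of $TG$. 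As in the proof of Theorem~\ref{thmTransversalProjectiveTverberg} (with $m=2$), the precise deduction is a parametrized Borsuk--Ulam argument identical to the ones in K.~\cite{kar2007ttc} and \blagojevic--M.--Ziegler~\cite{bmz2011ctv}, and the only topological input it requires is that the mod-$p$ Euler class $e(TG)\in H^D(G;\FF_p)$ is nonzero (the analogue of condition \eqref{eqEulerClassOfBCondition}, with $e(\Delta)$ replaced by $e(TG)$ and $m=2$).

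The heart of the matter is therefore the computation $e(TG)\neq 0$ in $H^D(G;\FF_p)$. For $p=2$ one argues directly on $G$: $e(TG)$ is the top Stiefel--Whitney class $w_D(TG)$, which pairs with the $\FF_2$-fundamental class to $\chi(G)\bmod 2$, so it is nonzero iff $\chi(G)$ is odd. For odd $p$ one passes to the oriented double cover $\widetilde G:=\widetilde G(d+1,v+1)$, lifting the whole construction (harmless, as everything is natural in $\widehat V$); the hypothesis that $D$ is even is used precisely to make sense of $e(T\widetilde G)\in H^D(\widetilde G;\ZZ)$, and $\langle e(T\widetilde G),[\widetilde G]\rangle=\chi(\widetilde G)=2\chi(G)$, which is nonzero mod $p$ iff $\chi(G)$ is. Under the standing hypothesis that $D=(v+1)(d-v)$ is even --- equivalently $\dim G$ is even, i.e. we avoid the exceptional parity case in which the Euler characteristic of the real Grassmannian vanishes --- one has $\chi(G(d+1,v+1))=\binom{\lfloor(d+1)/2\rfloor}{\lfloor(v+1)/2\rfloor}$, so the assumption that this binomial coefficient is nonzero mod $p$ is exactly what is needed. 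I expect this characteristic-class bookkeeping --- setting up the parametrized Borsuk--Ulam argument over the (oriented, for odd $p$) Grassmannian and matching its nonvanishing condition to $\chi(G)$ --- to be the main obstacle; everything else is a direct adaptation of machinery already developed in Sections~\ref{cpr-proof-sec}--\ref{trans-sec}.

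It then remains, exactly as there, to choose a sequence $\eps_m\to+0$ and pass to a limit: the synchronized data $(\widehat V_{\eps_m},\widehat W_{\eps_m})$, the two families of partitions, and the quadratic forms $q^j_F$ converge, after passing to a subsequence, to some $(\widehat V,\widehat W)$, partitions of the $X^j$, and limiting forms $q^j_F$. That the two closed parts cut out by any pair of hyperplanes $H_1\supseteq V$, $H_2\supseteq W$ meet every $X^j_i$ is verified by the Case~1/Case~2 analysis of Section~\ref{cpr-proof-sec} applied to each $j$ separately, and the degenerate possibility $\widehat V\cap\widehat W\neq 0$ is excluded under the general position assumption on the $X^j$ exactly as in the proof of Theorem~\ref{projective-tver}.
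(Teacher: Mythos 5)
Your proposal follows essentially the same route as the paper: parametrize the test map over the full Grassmannian $G(d+1,v+1)$, identify the bundle of Schubert cells with $\gamma\otimes\gamma^\perp\iso TG$, reduce via the parametrized Borsuk--Ulam scheme of \cite{kar2007ttc,bmz2011ctv} to the nonvanishing of the mod-$p$ Euler class of $TG$, and evaluate it as $\chi(G(d+1,v+1))=\binom{\lfloor(d+1)/2\rfloor}{\lfloor(v+1)/2\rfloor}$ when $D$ is even. The only cosmetic divergence is that you handle the odd-$p$ orientation issue by lifting to the oriented double cover, whereas the paper works on $G$ itself with coefficients twisted by the orientation character of $\gamma\otimes\gamma^\perp$ and notes that the twist cancels under Poincar\'e duality; both are fine.
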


\begin{remark}
It was already known to Kummer that $\binom{n}{k}$ is nonzero mod $p$ if and only if for all $i\geq 0$ the $i$'th digit of $n$ in the $p$-ary representation is greater or equal to the $i$'th digit of $k$.
\end{remark}

\begin{remark}
As a corollary we obtain a corresponding center point theorem for partitioning two measures under the assumption that $D$ is even. The proof is analogous to the one of Corollary~\ref{projective-ctt}.
\end{remark}

\begin{proof}
We follow the proof of Theorem~\ref{projective-tver} for $X^j$ and describe the set of appropriate $\widehat W_j$ for every $\widehat V$ as a preimage.
We have to find some $\widehat V$ such that $\widehat W_1$ and $\widehat W_2$ are equal. Their difference is an element of $(\mathbb R^{d+1}/\widehat V) \otimes \widehat V$. So (the formal argument is similar to the proof of Theorem~\ref{thmTransversalProjectiveTverberg}, using a parameterized testmap) we observe that the spaces $(\mathbb R^{d+1}/\widehat V) \otimes \widehat V$ form a vector bundle over the Grassmannian $G(d+1, v+1)$ of all possible $V$ and this bundle is isomorphic to $\gamma\otimes \gamma^\perp$, where $\gamma$ is the tautological bundle over the Grassmannian and $\gamma^\perp$ its orthogonal complement bundle.

It remains to check that the Euler class $e(\gamma\otimes\gamma^\perp)\in H^D(G(d+1,v+1);\mathcal F_p)$ is nonzero (the coefficients are $\FF_p$ with possible twist corresponding to the orientation character of $\gamma\otimes\gamma^\perp$).
An open neighborhood (and thus the tangent space) of $G(d+1,v+1)$ at a point $\widehat V$ can be identified with
$$
\widehat V^*\otimes \widehat V^\perp = \Hom (\widehat V, \widehat V^\perp),
$$
because adding such maps to the inclusion $\widehat V\incl\RR^{d+1}$ gives linear maps $\widehat V\to \mathbb R^{d+1}$ with all possible images intersecting $\widehat V^\perp$ trivially.
Thus the images of such maps constitute the Schubert cell neighborhood of $\widehat V$.
Since the identifications are natural and $\gamma^*\iso\gamma$, we have:
$$
\gamma\otimes\gamma^\perp \iso TG(d+1, v+1).
$$
Thus the Euler class $e(\gamma\otimes\gamma^\perp)$ is Poincar\'e dual to the element of $H_0(G(d+1, v+1); \FF_p)$ equal to the Euler characteristic of $G(d+1,v+1)$ mod $p$; the cohomology twist is eliminated because it has to be tensor multiplied by the orientation sheaf of $TG(d+1, v+1)$. The Euler characteristic of the Grassmannian is equal to the value of the $q$-binomial coefficient $\binom{d+1}{v+1}_q$ at $q=-1$. This value can be calculated by the inductive formula:
$$
\binom{n}{k}_{-1} = \binom{n-1}{k-1}_{-1} + (-1)^k \binom{n-1}{k}_{-1},\ \ \ \binom{n}{0}_{-1}=\binom{n}{n}_{-1}=1,
$$
and it finally turns out to be zero if $D$ is odd and equal to
$$
\binom{\lfloor\frac{d+1}{2}\rfloor}{\lfloor\frac{v+1}{2}\rfloor}
$$
if $D$ is even.
\end{proof}

%%%%%%%%%%%%%%%%%%%%%%%%%%%%%%%%%%%%%%%%%%%%%%%%%%%%%%%%%%%
\section{A corresponding ``transversal'' generalization} \label{secTranversalGeneralizationOfTverberg}
%%%%%%%%%%%%%%%%%%%%%%%%%%%%%%%%%%%%%%%%%%%%%%%%%%%%%%%%%%%

In this section we extend Theorem \ref{projective-tver2} under some algebraic assumptions to the case of more than two masses in the same way as Theorem \ref{thmTransversalProjectiveTverberg} extends Theorem \ref{projective-tver}.
The pay-off for establishing this is that the dimension of $V$ and $W$ must increase.

Let $0\leq v,w < d$, $m\geq 1$, $r_j=2^{\alpha_j}$ $(1\leq j\leq m)$, $\widehat v := d-v$, $\widehat w := d-w$, $D:= \widehat v\widehat w$.
Suppose that $X^1,\ldots,X^m$ are $m$ point sets in $\RP^d$ of size $|X^j|=(D+1)(r_j-1)+1$.
We will find a condition under which there exist projective subspaces $V,W\subset\RP^d$ of dimensions $v$ and $w$ and partitions of every $X^j$ into $r_j$ pieces
\[
X^j = X_1^j\dotcup\ldots\dotcup X_{r_j}^j,
\]
such that any pair of hyperplanes $H_1\supseteq V$ and $H_2\supseteq W$ partitions $\RP^d$ into two parts so that each part has nonempty intersection with every $X^j_i$.

Similarly to the above sections, the topological setup for this is the following.
Let $B:=F(\widehat v, \widehat v+\widehat w, d+1)$ be the partial flag manifold of all pairs $(\widehat V, U)$ of linear subspaces of $\RR^{d+1}$ such that $\dim(\widehat V)=\widehat v$, $\dim{U}=\widehat v+\widehat w$, and $\widehat V\subset U$.
Let $\eta$ be the rank $D$ vector bundle over $B$ whose fiber over $(\widehat V,U)$ is $\widehat V\otimes (U/\widehat V)$.
Let $w_i(\eta)\in H^i(B;\FF_2)$ denote its Stiefel--Whitney classes.
The condition we look for is
\begin{equation}
\label{eqWofEta}
w_D(\eta)^{m-1}\neq 0.
\end{equation}

We can put this condition into purely algebraic terms as follows using a particular instance of the splitting principle.
Let $F:=\textnormal{Flag}(\RR^{d+1})=F(1,2,\ldots,d+1)$ be the complete real flag manifold in $\RR^{d+1}$.
There is a canonical projection $p:F\to B$.
Let $\ell_1,\ldots,\ell_{d+1}$ be the tautological line bundles over $F$.
Their first Stiefel--Whitney classes $e_i:=w_1(\ell_i)$ generate the ring $H^*(F;\FF_2)$.
More exactly,
\begin{equation}
\label{eqCohomologyOfFlagManifold}
H^*(F;\FF_2)\ \iso\ \FF_2[e_1,\ldots,e_{d+1}]/\left(\prod (1+e_i) = 1\right).
\end{equation}
The relation means geometrically that the sum $\ell_1\oplus\ldots\oplus\ell_{d+1}$ is trivial and algebraically that all elementary symmetric polynomials in $e_1,\ldots,e_{d+1}$ vanish.
The reader may find the latter fact and other standard facts about vector bundles and their characteristic classes in~\cite{mishch1998}.

Now,
\begin{align*}
w(p^*\eta) &= w\big((\ell_1\oplus\ldots\oplus\ell_{\widehat v})\otimes(\ell_{\widehat v+1}\oplus\ldots\oplus\ell_{\widehat v+\widehat w})\big) \\
&= {\prod_{i,j}}^* w(\ell_i\otimes \ell_j)\ =\ {\prod_{i,j}}^* \big(1+w_1(\ell_i)+w_1(\ell_j)\big),
\end{align*}
where $\prod_{i,j}^*$ denotes that the product runs over all pairs $(i,j)$ with $1\leq i\leq \widehat v$ and $\widehat v+1\leq j\leq\widehat v+\widehat w$.
In particular,
\begin{equation}
\label{eqSWofEta}
w_D(p^*\eta) = {\prod_{i,j}}^* (e_i+e_j).
\end{equation}
Thus $\eqref{eqWofEta}$ is equivalent to $\prod_{i,j}^* (e_i+e_j)^{m-1}\neq 0$ in $H^*(F;\FF_2)$.
This proves:

\begin{theorem}
Let $0\leq v,w < d$, $m\geq 1$, $r_j=2^{\alpha_j}$ $(1\leq j\leq m)$, $D:= (d-v)(d-w)$, such that the product \eqref{eqSWofEta} is non-zero in \eqref{eqCohomologyOfFlagManifold}.

Suppose that $X^1,\ldots,X^m$ are $m$ point sets in $\RP^d$ of size $|X^j|=(D+1)(r_j-1)+1$.

Then there exist projective subspaces $V,W\subset\RP^d$ of dimensions $v$ and $w$ and partitions of every $X^j$ into $r_j$ pieces
\[
X^j = X_1^j\dotcup\ldots\dotcup X_{r_j}^j,
\]
such that any pair of hyperplanes $H_1\supseteq V$ and $H_2\supseteq W$ partitions $\RP^d$ into two parts so that each part has nonempty intersection with every $X^j_i$.
\end{theorem}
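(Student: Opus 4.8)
The proof recapitulates the discussion of this section, so the plan is to run the configuration-space/test-map scheme of Theorems~\ref{projective-tver} and~\ref{thmTransversalProjectiveTverberg} over the flag manifold $B=F(\widehat v,\widehat v+\widehat w,d+1)$ and then translate the resulting topological condition via the splitting-principle identity~\eqref{eqSWofEta}.

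First I would dualize projectively: it is equivalent to produce the duals $\widehat V,\widehat W\subset\RR^{d+1}$ of $V,W$, of dimensions $\widehat v=d-v$ and $\widehat w=d-w$, with $\widehat W$ lying inside an auxiliary subspace $U\supseteq\widehat V$ of dimension $\widehat v+\widehat w$ and meeting $\widehat V$ trivially. Fix a point $(\widehat V,U)\in B$ and an $\eps>0$. As in the proof of Theorem~\ref{projective-tver}, each $X^j$ (of size $n_j=(D+1)(r_j-1)+1$) and each $p\in\Delta^{n_j-1}$ determine a positive definite quadratic form $q_{p,\eps}$ on $U$, and sending $p$ to the $q_{p,\eps}$-orthogonal complement of $\widehat V$ in $U$ yields a map $\Delta^{n_j-1}\to S_U$, where $S_U$ is the Schubert cell of all $\widehat w$-dimensional subspaces of $U$ that meet $\widehat V$ trivially. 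We seek $r_j$ points of $\Delta^{n_j-1}$ with pairwise disjoint supports on which this map agrees, i.e. a point in the preimage, under the $\Sg_{r_j}$-equivariant test map $f_{U,j,\eps}\colon(\Delta^{n_j-1})^{r_j}_{\Delta(2)}\to(S_U)^{r_j}$, of the thin diagonal.

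Next I would let $(\widehat V,U)\in B$ vary. Restricting a fixed inner product of $\RR^{d+1}$ identifies $S_U$ naturally with $\widehat V^*\otimes(U/\widehat V)$, which over $\FF_2$ (via $\gamma^*\cong\gamma$) is the fiber of the rank-$D$ bundle $\eta$; so the $f_{U,j,\eps}$ assemble into a $\Sg_{r_j}$-equivariant bundle map $f_{j,\eps}\colon B\times(\Delta^{n_j-1})^{r_j}_{\Delta(2)}\to E(\eta^{\oplus r_j})$ over $B$, and one must show that the preimages of the thin diagonal subbundles $\Delta_j\cong E(\eta)$ have a common point,
\[
f_{1,\eps}^{-1}(\Delta_1)\cap\ldots\cap f_{m,\eps}^{-1}(\Delta_m)\neq\emptyset .
\]
By the parametrized Borsuk--Ulam argument used for Theorem~\ref{thmTransversalProjectiveTverberg} (following~\cite{kar2007ttc,bmz2011ctv}), this holds once the mod-$2$ Euler class of $\Delta_j$, namely $w_D(\eta)\in H^D(B;\FF_2)$, satisfies $w_D(\eta)^{m-1}\neq 0$, which is exactly condition~\eqref{eqWofEta}. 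Pulling back along $p\colon F\to B$, whose $\FF_2$-cohomology map is injective by Leray--Hirsch, and applying~\eqref{eqSWofEta}, condition~\eqref{eqWofEta} becomes equivalent to $\prod_{i,j}^*(e_i+e_j)^{m-1}\neq 0$ in the ring~\eqref{eqCohomologyOfFlagManifold}, i.e. to the hypothesis of the theorem.

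Finally, as in the earlier proofs, I would let $\eps\to+0$: by compactness of $B$ and of the simplices, pass to a convergent subsequence of the data $(\widehat V_{\eps},U_{\eps})$, of the supporting points, and of the possibly degenerate limiting forms $q_F$; then the case analysis of Section~\ref{cpr-proof-sec}, applied to each subfamily separately — a convex combination of squares $(a_ix+b_iy)^2$ with $a_i,b_i>0$ is never $q_F$-orthogonal between the two coordinate axes, and in the coincident-lines case the offending hyperplane already meets both parts — shows that each part cut out by any $H_1\supseteq V$ and $H_2\supseteq W$ meets every $X^j_i$, the degenerate case $\widehat V\cap\widehat W\neq 0$ being harmless exactly as there. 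The main obstacle is the parametrized Borsuk--Ulam bookkeeping: verifying that the obstruction to the displayed nonemptiness is governed precisely by $w_D(\eta)^{m-1}$, and that~\eqref{eqSWofEta} together with the injectivity of $p^*$ detects its nonvanishing faithfully; everything else transcribes the arguments of Sections~\ref{cpr-proof-sec}--\ref{trans-sec}.
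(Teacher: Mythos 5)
Your proposal is correct and follows essentially the same route as the paper: the paper's ``proof'' of this theorem is precisely the preceding discussion of Section~\ref{secTranversalGeneralizationOfTverberg}, namely the test-map scheme over the partial flag manifold $B=F(\widehat v,\widehat v+\widehat w,d+1)$ with obstruction $w_D(\eta)^{m-1}\neq 0$, translated into the algebraic condition via the splitting principle and the injectivity of $p^*\colon H^*(B;\FF_2)\to H^*(F;\FF_2)$, followed by the same limiting argument as in Sections~\ref{cpr-proof-sec}--\ref{trans-sec}.
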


%%%%%%%%%%%%%%%%%%%%%%%%%%%%%%%%%%%%%%%%%%%%%%%%%%%%%%%%%%%

\end{document}